\newcounter{stepctr}
{\end{list}}
\newtheorem{thm}{Theorem}[section]
\newtheorem{cor}[thm]{Corollary}
\theoremstyle{definition}
\newtheorem{dfn}[thm]{Definition}
\newtheorem{ex}[thm]{Example}
\newtheorem{rema}[thm]{Remark}
\newtheorem{lem}[thm]{Lemma}
\newtheorem{prob*}{Open problem}
\newcommand{\demo}{\begin{proof}}
\newcommand{\R}{\ensuremath{\mathcal{R}}}
\newcommand{\N}{\mathbb{N}}
\def\ll^2{{\mathcal L}(\ell^2(\N))}
\def\f^0x{{\mathcal F^0}(X) }
\title
{\bf  Extended   Rako\v{c}evi\'{c}'s property}
\author{Kaoutar Ben Ouidren, Hassan  Zariouh }
\date{}
\begin{document}
\maketitle \thispagestyle{empty}
\begin{abstract}\noindent\baselineskip=10pt
The purpose of this paper is to introduce and study  new extension of   Rako\v{c}evi\'{c}'s property $(w)$ and   property $(b)$ introduced by Berkani--Zariouh in \cite{berkani-zariouh1}, in connection with other Weyl type theorems and recent properties. We prove in particular,  the two following results: \\
1. A bounded linear  operator  $T$ satisfies property $(w_{\pi_{00}})$ if and only if $T$ satisfies property  $(w)$ and  $\sigma_{uf}(T)=\sigma_{uw}(T).$\\
2. $T$ satisfies property $(gw_{\pi_{00}})$ if and only if $T$ satisfies property  $(w_{\pi_{00}})$ and  $\pi_{0}(T)=p_{0}^a(T).$ Classes of operators are considered as illustrating examples.

\end{abstract}

 \baselineskip=15pt
 \footnotetext{\small \noindent  2010 AMS subject
classification: Primary 47A53, 47A10, 47A11, 47A55  \\
\noindent Keywords: property $(gbz_{1}),$ property $(gw_{\pi_{00}}),$  upper semi-B-Fredholm spectrum }
 \baselineskip=15pt

\section{Introduction and basic definitions }

This paper is a continuation of \cite{kaoutar-zariouh}, where we introduced and
studied  the new approach to a-Weyl's theorem. Our purpose is
to investigate  new  spectral properties named  $(bz_{1}),$ $(gbz_{1}),$ $(w_{\pi_{00}})$ and
$(gw_{\pi_{00}})$ (see Definitions \ref{defn1} and \ref{defn2}) for bounded
linear operators as new versions of properties $(bz),$  $(w_{\pi_{00}^{a}})$ and $(gw_{\pi_{00}^{a}}),$ which have been introduced and studied in \cite{kaoutar-zariouh}. The
results obtained are summarized in the diagram presented at the end
of this paper. In order to simplify, we use the same symbols and notations used in \cite{kaoutar-zariouh}. For more details on  several  classes and spectra originating from  Fredholm theory or  B-Fredholm theory, we refer the reader to \cite{kaoutar-zariouh, berkani-zariouh1}. The following  list, summarizes  the same   notations and symbols used in \cite{kaoutar-zariouh}, which will be
needed in the sequel.\\

 \begin{tabular}{l|l}
   $\sigma_{b}(T)$:  Browder spectrum of $T$ &  $\Delta^g(T):=\sigma(T)\setminus\sigma_{bw}(T)$\\
   $\sigma_{uf}(T)$:  upper semi-Fredholm spectrum of $T$ & $\Delta_{uf}^g(T):=\sigma_{a}(T)\setminus\sigma_{ubf}(T)$\\
   $\sigma_{ubf}(T)$:  upper semi-B-Fredholm spectrum of $T$ & $\Delta_{uf}(T):=\sigma_{a}(T)\setminus\sigma_{uf}(T)$  \\
   $\sigma_{ub}(T)$:  upper semi-Browder spectrum of $T$ & $\Delta_{a}^g(T):=\sigma_{a}(T)\setminus\sigma_{ubw}(T)$\\
   $\sigma_{w}(T)$: Weyl spectrum of $T$ & $\Delta(T):=\sigma(T)\setminus\sigma_{w}(T)$\\
   $\sigma_{bw}(T)$: B-Weyl spectrum of $T$ & $p_{00}(T)$: poles of $T$ of finite rank\\
   $\sigma_{uw}(T)$: upper semi-Weyl spectrum of $T$ & $p_{0}^a(T)$: left  poles of $T$\\
   $\sigma_{ubw}(T)$: upper semi-B-Weyl spectrum of $T$ & $p_{00}^a(T)$: left  poles of $T$ of finite rank\\
   $\sigma_{ld}(T)$: left Drazin spectrum of $T$ & $p_{0}(T)$: poles of $T$\\
   $ \sigma_{d}(T)$:  Drazin spectrum of $T,$& $\pi_{0}^a(T):=\mbox{iso}\,\sigma_a(T)\cap\sigma_{p}(T)$\\
    $\sigma_{p}(T)$:  eigenvalues of $T$ & $\pi_{0}(T):=\mbox{iso}\,\sigma(T)\cap\sigma_{p}(T)$\\
$\sigma_{p}^f(T)$: eigenvalues of $T$ of finite multiplicity & $\pi_{00}(T):=\mbox{iso}\,\sigma(T)\cap\sigma_{p}^f(T)$\\
$\Delta_{a}(T):=\sigma_{a}(T)\setminus\sigma_{uw}(T)$ & $\pi_{00}^a(T):=\mbox{iso}\,\sigma_a(T)\cap\sigma_{p}^f(T)$\\
\end{tabular}\\

\begin{dfn}\cite{ amouch-berkani, kaoutar-zariouh, berkani-zariouh1, rakocevic} Let $T\in L(X).$ We say that\\
(i) $T$ satisfies property  $(w)$ if $\Delta_{a}(T)=\pi_{00}(T).$  \\
(ii) $T$ satisfies property  $(gw)$ if $\Delta_{a}^g(T)=\pi_{0}(T).$\\
(iii) $T$ satisfies property  $(b)$ if $\Delta_{a}(T)=p_{00}(T).$\\
(iv) $T$ satisfies property  $(gb)$ if $\Delta_{a}^g(T)=p_{0}(T).$\\
(v) $T$ satisfies property  $(bz)$ if $\Delta_{uf}(T)=p_{00}^a(T).$\\
(vi) $T$ satisfies property  $(gbz)$ if $\Delta_{uf}^g(T)=p_{0}^a(T).$
\end{dfn}

To give the reader a good overview of the subject, we
 summarize in the following  theorem  some known results which are useful in this paper.

\begin{thm}{\rm\cite{ amouch-berkani, kaoutar-zariouh, berkani-zariouh1}} \label{thm0} The following statements hold  for every $T\in L(X).$\\
(i) Property $(gb)$ holds for $T$ if and only if property $(b)$ holds for $T$ and $p_{0}(T)=p_{0}^a(T).$\\
(ii) Property $(gw)$ holds for $T$ if and only if property $(w)$ holds for $T$ and $p_{0}^a(T)=\pi_{0}(T).$\\
(iii) $\sigma_{ubf}(T)=\sigma_{ubw}(T)$ if and only if $\sigma_{uf}(T)=\sigma_{uw}(T).$\\
(iv) Property $(gbz)$ holds for $T$ if and only if property $(bz)$ holds for $T.$
\end{thm}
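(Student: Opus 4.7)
The plan is to prove each of the four equivalences by unpacking the definitions and leaning on a small set of core tools. First, the elementary inclusions $\sigma_{ubw}(T)\subset\sigma_{uw}(T)$ and $\sigma_{ubf}(T)\subset\sigma_{uf}(T)$, which yield $\Delta_{a}(T)\subset\Delta_{a}^g(T)$ and $\Delta_{uf}(T)\subset\Delta_{uf}^g(T)$. Second, Berkani's structure theorem: $\lambda-T$ is upper semi-B-Weyl (resp.\ upper semi-B-Fredholm) iff it decomposes as $(\lambda-T)_0\oplus(\lambda-T)_1$ with $(\lambda-T)_0$ upper semi-Weyl (resp.\ upper semi-Fredholm) and $(\lambda-T)_1$ nilpotent. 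Third, the standard hierarchies $p_{00}(T)\subset p_0(T)\subset p_0^a(T)$ and $\pi_{00}(T)\subset\pi_0(T)$, together with the fact that (left) poles at points of $\sigma_a(T)$ are upper semi-B-Weyl via left Drazin invertibility.

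For (i), assuming $(gb)$, the chain $\Delta_{a}(T)\subset\Delta_{a}^g(T)=p_0(T)$ forces each $\lambda\in\Delta_{a}(T)$ to be both a pole and of finite nullity (the latter from upper semi-Weylness), hence to lie in $p_{00}(T)$; the reverse inclusion $p_{00}(T)\subset\Delta_{a}(T)$ is automatic from Browderness, so $(b)$ holds. The equality $p_0^a(T)=p_0(T)$ then follows from $p_0^a(T)\subset\Delta_{a}^g(T)=p_0(T)$ combined with the trivial inclusion $p_0(T)\subset p_0^a(T)$. The converse recovers $(gb)$ by noting that left Drazin invertibility puts $p_0^a(T)$ inside $\Delta_{a}^g(T)$, and the hypothesis $p_0^a(T)=p_0(T)$ closes the loop via $(b)$. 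Part (ii) follows the same template with $\pi_{00},\pi_0$ replacing $p_{00},p_0$; the only adjustment is that ``pole'' becomes ``isolated eigenvalue'', which is arranged precisely by the assumed identity $p_0^a(T)=\pi_0(T)$.

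For (iii), one direction is routine from the spectral inclusions above. For the converse, given $\lambda\notin\sigma_{ubf}(T)$ the structure decomposition produces an upper semi-Fredholm $(\lambda-T)_0$; the assumption $\sigma_{uf}(T)=\sigma_{uw}(T)$ then forces $\mathrm{ind}\,(\lambda-T)_0\le 0$, which means $\lambda\notin\sigma_{ubw}(T)$.

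Part (iv) is the most subtle, since no auxiliary hypothesis is required. The easy direction $(gbz)\Rightarrow(bz)$ reads off from $\Delta_{uf}(T)\subset\Delta_{uf}^g(T)=p_0^a(T)$: any $\lambda\in\Delta_{uf}(T)$ is a left pole whose upper semi-Fredholmness forces finite nullity, so $\lambda\in p_{00}^a(T)$, and the reverse inclusion $p_{00}^a(T)\subset\Delta_{uf}(T)$ is automatic. The main obstacle is the reverse $(bz)\Rightarrow(gbz)$, where one must promote the equality $\Delta_{uf}(T)=p_{00}^a(T)$ all the way up to $\Delta_{uf}^g(T)=p_0^a(T)$. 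The inclusion $p_0^a(T)\subset\Delta_{uf}^g(T)$ is standard from left Drazin invertibility; for the reverse I would invoke the punctured-neighborhood theorem for B-Fredholm operators to produce nearby $\mu$ with $\mu-T$ upper semi-Fredholm, place $\mu\in\Delta_{uf}(T)=p_{00}^a(T)$, and conclude that $\lambda$ is isolated in $\sigma_{a}(T)$, hence a left pole, so $\lambda\in p_0^a(T)$.
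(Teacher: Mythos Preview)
The paper does not actually prove Theorem~\ref{thm0}: it is stated there as a summary of results quoted from \cite{amouch-berkani,kaoutar-zariouh,berkani-zariouh1}, with no argument given. So there is no ``paper proof'' to compare against, and the question reduces to whether your outline is sound on its own terms.

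Two places need repair. First, in part~(iii), your key step is a non sequitur as written. Even if one grants the direct-sum decomposition $\lambda-T=(\lambda-T)_0\oplus(\lambda-T)_1$ with $(\lambda-T)_0$ upper semi-Fredholm and $(\lambda-T)_1$ nilpotent (and this decomposition for \emph{semi}-B-Fredholm operators on general Banach spaces is itself delicate; Berkani's clean version is for B-Fredholm), the hypothesis $\sigma_{uf}(T)=\sigma_{uw}(T)$ concerns $T$ acting on all of $X$, not the restriction to the invariant summand. It says nothing directly about $\mathrm{ind}\,(\lambda-T)_0$. To make the inference you must first pass to a nearby $\mu\ne\lambda$ where $\mu-T$ is upper semi-Fredholm on $X$, apply the hypothesis there to get $\mathrm{ind}(\mu-T)\le 0$, and then pull the index back to $\lambda$ by local constancy. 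But that is precisely the punctured-neighbourhood argument of \cite[Corollary~3.2]{berkani-sarih}, and it renders the structure decomposition superfluous; the paper itself uses exactly this route in the proof of its Lemma~\ref{lem1} for the analogous Weyl/B-Weyl statement.

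Second, in the converse of~(i) the phrase ``closes the loop via $(b)$'' hides the only nontrivial step. You have established $p_0(T)=p_0^a(T)\subset\Delta_a^g(T)$, but you still owe the inclusion $\Delta_a^g(T)\subset p_0^a(T)$. What makes this work is that property~$(b)$ forces $\sigma_{uw}(T)=\sigma_{ub}(T)$ (a-Browder's theorem), which is equivalent to $\sigma_{ubw}(T)=\sigma_{ld}(T)$ (generalized a-Browder's theorem), and the latter is exactly $\Delta_a^g(T)=p_0^a(T)$. The same missing bridge appears in your sketch of the converse of~(ii). Your treatment of~(iv) via the punctured neighbourhood is on the right track, though you should make explicit the dichotomy that either $N(\mu-T)=0$ throughout the punctured disc (so $\lambda\in\mathrm{iso}\,\sigma_a(T)$ directly) or the disc lies in the countable set $p_{00}^a(T)$, which is impossible.
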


\section{Remarks and improvement}

The following property has relevant role in local
spectral theory: a bounded linear
 operator $T\in L(X)$ is said to have the {\it single-valued
 extension property} (SVEP for short) at $\lambda\in\mathbb{C}$ if
  for every open neighborhood $U_\lambda$ of $\lambda,$ the  function $f\equiv 0$ is the only
 analytic solution of the equation
 $(T-\mu I)f(\mu)=0\quad\forall\mu\in U_\lambda.$ We denote by
  ${\mathcal S}(T)=\{\lambda\in\mathbb{C}: T\mbox{  does not have the SVEP at } \lambda\}$
     and we say that  $T$ has  SVEP
  if
$ {\mathcal S}(T)=\emptyset.$ We say that $T$ has the SVEP on $A\subset\mathbb{C},$ if $T$ has the SVEP
at every $\lambda\in A.$ (For more details about this property, we refer the reader to \cite{aiena1}).

From \cite[Theorem 2.7]{kaoutar-zariouh}, we have $T\in L(X)$ has the SVEP on $(\sigma_{uf}(T))^C$ if and only if $T$ has the SVEP on $(\sigma_{ubf}(T))^C,$ where $(\sigma_{uf}(T))^{C}$ and $(\sigma_{ubf}(T))^{C}$ means respectively, the complementary of upper semi-Fredholm spectrum  and the complementary of upper semi-B-Fredholm spectrum of an operator $T.$

\begin{lem}\label{lem1} For every $T\in L(X),$ we have the following equivalence $$\sigma_{uw}(T)=\sigma_{w}(T) \Longleftrightarrow \sigma_{ubw}(T)=\sigma_{bw}(T).$$
\end{lem}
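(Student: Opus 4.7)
The plan is to prove each direction separately, since the trivial inclusions $\sigma_{ubw}(T)\subseteq\sigma_{bw}(T)$ and $\sigma_{uw}(T)\subseteq\sigma_{w}(T)$ already hold. In both directions the main tool will be the punctured neighborhood theorem for upper semi-B-Fredholm operators, together with the elementary observation that any upper semi-Fredholm operator of index $0$ is in fact Fredholm (hence Weyl).

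For the forward implication, I would start with $\lambda_{0}\notin\sigma_{ubw}(T)$, so that $T-\lambda_{0}I$ is upper semi-B-Fredholm of B-index $\le 0$. The punctured neighborhood theorem produces $\varepsilon>0$ such that for every $\lambda$ with $0<|\lambda-\lambda_{0}|<\varepsilon$ the operator $T-\lambda I$ is upper semi-Fredholm with $\mathrm{ind}(T-\lambda I)=\mathrm{ind}(T-\lambda_{0}I)\le 0$; the hypothesis $\sigma_{uw}(T)=\sigma_{w}(T)$ upgrades each such $T-\lambda I$ to Weyl, forcing its index to be $0$. Consequently the B-index of $T-\lambda_{0}I$ equals $0$, i.e.\ its restriction to $(T-\lambda_{0}I)^{d}(X)$ is upper semi-Fredholm of index $0$, hence Fredholm of index $0$, which shows $T-\lambda_{0}I$ is B-Weyl and $\lambda_{0}\notin\sigma_{bw}(T)$.

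For the reverse implication, I would take $\lambda_{0}\notin\sigma_{uw}(T)$, so that $T-\lambda_{0}I$ is upper semi-Fredholm with $\mathrm{ind}(T-\lambda_{0}I)\le 0$; viewing it as upper semi-B-Fredholm (with $d=0$), its B-index coincides with its classical index. Thus $\lambda_{0}\notin\sigma_{ubw}(T)=\sigma_{bw}(T)$, so $T-\lambda_{0}I$ is B-Weyl with B-index $0$, which by the same identification gives $\mathrm{ind}(T-\lambda_{0}I)=0$; combined with upper semi-Fredholmness this makes $T-\lambda_{0}I$ Weyl, so $\lambda_{0}\notin\sigma_{w}(T)$.

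The main obstacle is really only the careful bookkeeping of what ``index'' means when passing between the Fredholm and B-Fredholm settings: the punctured neighborhood theorem must be invoked with its index-constancy clause (so that the perturbations $T-\lambda I$ are genuinely upper semi-Fredholm with a classical index equal to the B-index of $T-\lambda_{0}I$), and one must use that an upper semi-Fredholm operator of index zero is automatically Fredholm of index zero. Once these two facts are lined up, both implications follow by short chases.
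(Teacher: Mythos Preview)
Your proof is correct and follows essentially the same approach as the paper's: both directions use the punctured neighborhood theorem for semi-B-Fredholm operators (\cite[Corollary 3.2]{berkani-sarih}) to pass between the B-Fredholm and Fredholm settings, together with the fact that an upper semi-Fredholm operator of index $0$ is automatically Fredholm (hence Weyl). Your write-up is in fact more explicit than the paper's on the index bookkeeping---in particular on the identification of the B-index with the classical index when $d=0$, and on why B-Weyl forces $\lambda_{0}\notin\sigma_{w}(T)$ in the reverse implication---but the underlying argument is the same.
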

\begin{proof}

 $\Longleftarrow$) Let $\lambda\not\in\sigma_{uw}(T)$ be arbitrary and without loss of generality we can assume that $\lambda=0.$ So $T$ is an upper semi-Weyl and in particular, it is an upper semi-B-Weyl. Since $\sigma_{ubw}(T)=\sigma_{bw}(T),$  it follows that $0\not\in\sigma_{w}(T)$ and hence $\sigma_{uw}(T)=\sigma_{w}(T).$\\
 $\Longrightarrow$) Suppose that $0\not\in\sigma_{ubw}(T).$  Then from the punctured
neighborhood theorem for semi-B-Fredholm operators \cite[Corollary 3.2]{berkani-sarih}, there exists $\epsilon>0$ such that $T-\mu I $ is an upper semi-Fredholm and $\mbox{ind}(T-\mu I)=\mbox{ind}(T)$ for every $0<|\mu|<\epsilon.$ This implies that  $\mu\not\in\sigma_{uw}(T)=\sigma_{w}(T)$  and so $\mbox{ind}(T-\mu I)=\mbox{ind}(T)=0.$ Hence
 $\sigma_{ubw}(T)=\sigma_{bw}(T).$
 \end{proof}

 Now, we give an improvement of  some results of \cite{berkani-sarih-zariouh}. But to give the reader a global and precise view, we recall here these results  in the same order of the paper \cite{berkani-sarih-zariouh}.\\

 \noindent {\bf Lemma I}: \cite[Lemma 2.1]{berkani-sarih-zariouh} Let $T\in L(X).$  If $T^*$ has the SVEP on $(\sigma_{uw}(T))^{C},$ then $\sigma(T)=\sigma_{a}(T),$ $\sigma_{uw}(T)=\sigma_{w}(T).$\\

\noindent  {\bf Theorem I}: \cite[Theorem 2.2]{berkani-sarih-zariouh} Let $T\in L(X).$ Then the following assertions are equivalent. \\
(a) $T^*$ has the SVEP on $(\sigma_{uw}(T))^{C};$ \\
(b)  $T$ satisfies property $(b)$ and $\sigma_{uw}(T)=\sigma_{w}(T).$\\

  \noindent   {\bf Lemma II}: \cite[Lemma 2.4]{berkani-sarih-zariouh}   Let $T\in L(X).$  If $T^*$ has the SVEP on $(\sigma_{ubw}(T))^{C},$ then $\sigma(T)=\sigma_{a}(T),$ $\sigma_{ubw}(T)=\sigma_{bw}(T)=\sigma_{d}(T).$\\

  \noindent    {\bf Theorem II}: \cite[Theorem 2.5]{berkani-sarih-zariouh} Let $T\in L(X).$ Then the following assertions are equivalent.\\
   (c) $T^*$ has the SVEP on $(\sigma_{ubw}(T))^{C};$ \\
   (d)  $T$ satisfies property $(gb)$ and $\sigma_{ubw}(T)=\sigma_{bw}(T).$\\

 Using Lemma \ref{lem1}, we  remark that  Lemma II is an immediate consequence [since $(\sigma_{uw}(T))^{C}\subset (\sigma_{ubw}(T))^{C}$] of   Lemma I and  can be combined in a single  as follows:
  \begin{lem}\label{lem2} Let $T\in L(X).$ If $T^*$ has the SVEP on $(\sigma_{uw}(T))^{C},$ then we have \\
    (i) $\sigma(T)=\sigma_{a}(T),$ $\sigma_{ub}(T)=\sigma_{uw}(T)=\sigma_{w}(T)=\sigma_{b}(T).$\\
    (ii)   $\sigma_{ld}(T)=\sigma_{ubw}(T)=\sigma_{bw}(T)=\sigma_{d}(T),$ $p_{0}(T)=p_{0}^a(T)$ and $p_{00}(T)=p_{00}^a(T).$
  \end{lem}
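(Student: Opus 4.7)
The plan is to deduce both parts by carefully assembling the already-quoted results: Lemma I furnishes the ``upper semi-Weyl'' information, Lemma II (applied through the inclusion $(\sigma_{uw}(T))^{C}\subset(\sigma_{ubw}(T))^{C}$ coming from $\sigma_{ubw}(T)\subset\sigma_{uw}(T)$) furnishes the ``B-Weyl'' information, and Theorems I and II together with Theorem~\ref{thm0}(i) let us upgrade these to Browder/Drazin/pole equalities via short squeeze arguments. No new local-spectral computation is needed beyond Lemma~\ref{lem1}.

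For part (i), I would first invoke Lemma I directly to record $\sigma(T)=\sigma_a(T)$ and $\sigma_{uw}(T)=\sigma_w(T)$, and then Theorem I to obtain that $T$ has property $(b)$, i.e.\ $\sigma_a(T)\setminus\sigma_{uw}(T)=p_{00}(T)$. Substituting $\sigma_a(T)=\sigma(T)$ produces the chain
\[
p_{00}(T)\subseteq \sigma(T)\setminus\sigma_{b}(T)\subseteq \sigma(T)\setminus\sigma_{w}(T)\subseteq \sigma(T)\setminus\sigma_{uw}(T)=p_{00}(T),
\]
forcing $\sigma_{uw}(T)=\sigma_{w}(T)=\sigma_{b}(T)$; the elementary inclusions $\sigma_{uw}(T)\subseteq \sigma_{ub}(T)\subseteq \sigma_{b}(T)$ then insert $\sigma_{ub}(T)$ into the equality.

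For part (ii), the inclusion of complements noted above gives that $T^{*}$ has SVEP on $(\sigma_{ubw}(T))^{C}$, so Lemma II applies and yields $\sigma_{ubw}(T)=\sigma_{bw}(T)=\sigma_{d}(T)$. Squeezing between $\sigma_{ubw}(T)\subseteq \sigma_{ld}(T)\subseteq \sigma_{d}(T)$ then inserts $\sigma_{ld}(T)$ into this equality. Next, Theorem II tells us that $T$ satisfies property $(gb)$, and Theorem~\ref{thm0}(i) converts this into $p_{0}(T)=p_{0}^{a}(T)$. Since the inclusion $p_{00}(T)\subseteq p_{00}^{a}(T)$ is automatic, and conversely any element of $p_{00}^{a}(T)$ lies in $p_{0}^{a}(T)=p_{0}(T)$ and has finite rank, we also get $p_{00}(T)=p_{00}^{a}(T)$.

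The main obstacle, if there is one, is purely organizational: one must verify that the SVEP hypothesis on $(\sigma_{uw}(T))^{C}$ is strong enough to trigger both Lemma I (immediate) and Lemma II/Theorem II (via the inclusion of complements), and then check that each squeeze really collapses. Lemma~\ref{lem1} underlies the consistency of the two lemmas, which is why it was established first.
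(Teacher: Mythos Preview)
Your argument is correct, but it follows a somewhat different path from the paper, especially in part~(ii).

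For (i) the paper simply refers to the proof of Lemma~I in \cite{berkani-sarih-zariouh}, whereas you reconstruct the Browder equalities from Theorem~I (property~$(b)$) together with the squeeze $p_{00}(T)\subseteq\sigma(T)\setminus\sigma_b(T)\subseteq\sigma(T)\setminus\sigma_{uw}(T)=p_{00}(T)$; this is a fine self-contained alternative. For (ii) the paper does \emph{not} go back to Lemma~II or Theorem~II: instead it uses part~(i) together with Lemma~\ref{lem1} (this is the reason Lemma~\ref{lem1} was proved) to get $\sigma_{ubw}(T)=\sigma_{bw}(T)$, and then invokes the standard equivalences $\sigma_w(T)=\sigma_b(T)\Leftrightarrow\sigma_{bw}(T)=\sigma_d(T)$ and $\sigma_{uw}(T)=\sigma_{ub}(T)\Leftrightarrow\sigma_{ubw}(T)=\sigma_{ld}(T)$ to obtain the full chain; the pole equalities then drop out directly from $p_0(T)=\sigma(T)\setminus\sigma_d(T)=\sigma_a(T)\setminus\sigma_{ld}(T)=p_0^a(T)$. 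Your route---passing the SVEP hypothesis through the inclusion $(\sigma_{uw}(T))^{C}\subset(\sigma_{ubw}(T))^{C}$ to trigger Lemma~II and Theorem~II, and then reading off $p_0(T)=p_0^a(T)$ from Theorem~\ref{thm0}(i)---is equally valid but leans on the quoted external results rather than on Lemma~\ref{lem1}. The paper's approach has the advantage of showing that Lemma~II is actually redundant once Lemma~I and Lemma~\ref{lem1} are in hand, which is precisely the ``improvement'' the authors are advertising; your approach has the advantage of being a clean assembly of black-box citations with no appeal to the ``well-known'' Browder/B-Browder equivalences.
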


  \begin{proof}  (i) See the proof of \cite[Lemma 2.1]{berkani-sarih-zariouh}.\\
  (ii) From Lemma \ref{lem1}, we have  $\sigma_{ubw}(T)=\sigma_{bw}(T).$ It is well known that $\sigma_{w}(T)=\sigma_{b}(T)$ is equivalent to say that $\sigma_{bw}(T)=\sigma_{d}(T),$ and $\sigma_{uw}(T)=\sigma_{ub}(T)$ is equivalent to say that $\sigma_{ubw}(T)=\sigma_{ld}(T).$ Thus $\sigma_{ld}(T)=\sigma_{ubw}(T)=\sigma_{bw}(T)=\sigma_{d}(T).$ So $p_{0}(T)=\sigma(T)\setminus \sigma_{d}(T)= \sigma_{a}(T)\setminus \sigma_{ld}(T)=p_{0}^a(T),$ and this implies that $p_{00}(T)=p_{00}^a(T).$
  \end{proof}

     We prove in the following corollary, that the statements (a), (b), (c) and (d) of Theorem I and Theorem II are equivalent.

  \begin{cor} For every $T\in L(X),$ the following statement are equivalent.\\
  (i) $T^*$ has the SVEP on $(\sigma_{uw}(T))^{C};$\\
  (ii) $T$ satisfies property $(b)$ and  $\sigma_{uw}(T)=\sigma_{w}(T);$\\
  (iii)  $T^*$ has the SVEP on $(\sigma_{ubw}(T))^{C};$\\
 (iv)  $T$ satisfies property $(gb)$ and  $\sigma_{ubw}(T)=\sigma_{bw}(T).$
  \end{cor}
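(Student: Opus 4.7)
The plan is to piggyback on Theorem I and Theorem II, which already supply (i) $\Leftrightarrow$ (ii) and (iii) $\Leftrightarrow$ (iv) respectively. It therefore suffices to establish the bridge (i) $\Leftrightarrow$ (iii), and I would do this via two one-line implications that exploit Lemma \ref{lem2}.

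For (iii) $\Rightarrow$ (i), I would invoke nothing more than the trivial spectral inclusion $\sigma_{ubw}(T)\subseteq \sigma_{uw}(T)$, which flips to $(\sigma_{uw}(T))^{C}\subseteq (\sigma_{ubw}(T))^{C}$. Consequently SVEP of $T^{*}$ on the larger set is automatically inherited by the smaller one, and (i) follows.

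For (i) $\Rightarrow$ (iii), the key is that hypothesis (i) feeds directly into Lemma \ref{lem2}, from which we extract the chain $\sigma_{ubw}(T)=\sigma_{bw}(T)=\sigma_{d}(T)$. Hence every point $\lambda\in (\sigma_{ubw}(T))^{C}$ is either a resolvent point of $T$ (where SVEP of $T^{*}$ is vacuous) or lies in $p_{0}(T)$, i.e.\ is a pole of the resolvent of $T$. At a pole, a standard local spectral theory fact (see \cite{aiena1}) guarantees that both $T$ and $T^{*}$ have SVEP. Combining the two cases gives SVEP of $T^{*}$ on the whole of $(\sigma_{ubw}(T))^{C}$, which is (iii).

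The only real content is already contained in Lemma \ref{lem2}, which collapses $\sigma_{ubw}(T)$ down to the Drazin spectrum; once that collapse is on the table, the argument reduces to bookkeeping and to the classical pole-implies-SVEP observation, so no genuinely new estimate is required. The main obstacle, if any, is simply recognizing that Lemma \ref{lem2} is strong enough to convert the weaker SVEP hypothesis on $(\sigma_{uw}(T))^{C}$ into the stronger one on $(\sigma_{ubw}(T))^{C}$ without any further work.
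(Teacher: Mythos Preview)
Your proposal is correct and follows essentially the same strategy as the paper: both use Theorem~I, Theorem~II, the trivial inclusion for (iii) $\Rightarrow$ (i), and Lemma~\ref{lem2} to close the loop. The only cosmetic difference is that the paper closes the cycle via (i) $\Rightarrow$ (iv), reading off $\Delta_a^g(T)=p_0(T)$ and $\sigma_{ubw}(T)=\sigma_{bw}(T)$ directly from Lemma~\ref{lem2} (together with $\sigma(T)=\sigma_a(T)$), whereas you close it via (i) $\Rightarrow$ (iii) by noting that $(\sigma_{ubw}(T))^C=(\sigma_d(T))^C$ consists of resolvent points and poles, where $T^*$ trivially has SVEP; the two routes are interchangeable and of equal length.
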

  \begin{proof}(i) $\Longleftrightarrow$ (ii) See \cite[Theorem 2.2]{berkani-sarih-zariouh}.\\
    (iii) $\Longleftrightarrow$ (iv) See \cite[Theorem 2.5]{berkani-sarih-zariouh}.\\
    (iii) $\Longrightarrow$ (i) Obvious. \\
    (i) $\Longrightarrow$ (iv) If  $T^*$ has the SVEP on $(\sigma_{uw}(T))^{C},$ then $\sigma_{uw}(T)=\sigma_{w}(T).$ From Lemma \ref{lem2},  $\Delta_{a}^g(T)=p_{0}(T)$ and $\sigma_{u bw}(T)=\sigma_{bw}(T).$
  \end{proof}

\section{On the extension  of  property $(b)$ }

We begin this section by the following definition, in which we introduce  new spectral properties named $(bz_{1})$ an $(gbz_{1})$  which are extensions (see Theorem \ref{thm1} bellow)  of  properties $(b)$ and $(gb),$ respectively.

\begin{dfn}\label{defn1} A bounded linear operator  $T\in L(X)$ is said to satisfy:\\
(i) Property $(bz_{1})$ if $\Delta_{uf}(T)= p_{00}(T),$ or equivalently
$\sigma_{a}(T)= \sigma_{uf}(T)\bigsqcup p_{00}(T).$   \\
(ii)  Property $(gbz_{1})$ if $\Delta_{uf}^g(T)= p_{0}(T),$ or equivalently  $\sigma_{a}(T)= \sigma_{ubf}(T)\bigsqcup p_{0}(T).$
\end{dfn}

\begin{ex}\label{ex1} Here and elsewhere the operators
R and L are
 defined on the Hilbert space $ l^2(\mathbb{N})$ (which is usually denoted by $\l^2$) by
$$ R(x_1,x_2,x_3,\ldots)=(0,x_1,x_2,x_3,\ldots) \mbox{ and }   L(x_1,x_2,x_3,\ldots)=(x_2,x_3,x_4,\ldots).$$

 \noindent 1.  It is well known that $\sigma_a(R)=C(0,1),$ where $C(0, 1)$ is the unit circle
of $ \mathbb{C}$ and so  $p_{00}(R)= p_{0}(R)=\emptyset.$ From \cite[Example 2.1]{kaoutar-zariouh}, we have
  $\sigma_{ubf}(R)=\sigma_{uf}(R)=C(0, 1).$ On the Banach space   $ l^2 \oplus \l^2,$ we define the operator $T$ by $T=R\oplus 0.$ Then $\sigma_{a}(T)=\sigma_{uf}(T)=C(0, 1)\cup\{0\}$ and $p_{00}(T)= p_{0}(T)=\emptyset.$ So $\Delta_{uf}(T)=p_{00}(T)$ and this means that $T$ satisfies property $(bz_{1}).$ On the other hand, it is easily seen that $\sigma_{ubf}(T)=C(0, 1).$ So $\Delta_{uf}^g(T)=\{0\}\neq p_{0}(T),$ and this means that     $T$  does not  satisfy property $(gbz_{1}).$ \\
  2. Let $T$ be the operator defined on $l^2$ by $T (x_1,x_2,x_3,\ldots)=(\frac{x_2}{2},\frac{x_3}{3},\frac{x_4}{4},\ldots).$ It is easily seen that
    $\sigma_a(T)=\sigma_{uf}(T)=\sigma_{ubf}(T)=\{0\}.$ Since $\mbox{asc}(A)=\infty,$ it follows that  $p_{0}(T)=p_{00}(T)=\emptyset.$ So  $\Delta_{uf}(T)= p_{00}(T)$ and  $\Delta_{uf}^g(T)= p_{0}(T);$ this means that  $T$ satisfies    properties $(gbz_{1})$ and $(bz_{1}).$\\
 3. Let $T$ be the operator defined on  the Banach space $l^2\oplus l^2$ by $T=L\oplus R.$ We have $\sigma_a(T)=\sigma_a(L)\cup\sigma_a(R)=D(0,1),$ where $D(0,1)$ is the closed unit disc of $ \mathbb{C}$   and so $ p_{00}(T)=p_{0}(T)=\emptyset.$ It is (see also \cite{kaoutar-zariouh}) easily seen that $ \sigma_{uf}(T)= C(0, 1).$ So $T$ does not satisfy neither property $(bz_{1})$ nor property $(gbz_{1}).$
\end{ex}

 As we have showed  in Example \ref{ex1}, there exist in nature operators who do not satisfy neither  property $(bz_{1})$  nor property $(gbz_{1}).$ Nonetheless,   we explore some conditions   which guarantee these properties for a bounded linear operator. We begin by  giving,   a relationship between properties   $(bz_{1})$ and $(b),$ and  between properties   $(gbz_{1})$ and $(gb).$

\begin{thm}\label{thm1}  The following statements hold for every  $T\in L(X).$ \\
(i) $T$ satisfies property $(bz_{1})$ if and only if $T$ satisfies property  $(b)$ and  $\sigma_{uf}(T)=\sigma_{uw}(T).$\\
(ii)  $T$ satisfies property $(gbz_{1})$ if and only if $T$ satisfies property  $(gb)$ and $\sigma_{ubf}(T)=\sigma_{ubw}(T).$
\end{thm}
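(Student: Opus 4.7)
The plan is to exploit the two basic inclusions $\sigma_{uf}(T)\subset\sigma_{uw}(T)$ and $\sigma_{ubf}(T)\subset\sigma_{ubw}(T)$ (since every upper semi-(B-)Weyl operator is upper semi-(B-)Fredholm), which immediately give the set-theoretic inclusions $\Delta_a(T)\subset\Delta_{uf}(T)$ and $\Delta_a^g(T)\subset\Delta_{uf}^g(T)$. Each half of each equivalence then reduces to a short argument on these complementary sets.

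For part (i), the $(\Longleftarrow)$ direction is purely formal: assuming property $(b)$ and $\sigma_{uf}(T)=\sigma_{uw}(T)$, I would just compute
\[
\Delta_{uf}(T)=\sigma_a(T)\setminus\sigma_{uf}(T)=\sigma_a(T)\setminus\sigma_{uw}(T)=\Delta_a(T)=p_{00}(T),
\]
which is $(bz_1)$. For $(\Longrightarrow)$, assume $\Delta_{uf}(T)=p_{00}(T)$. The key observation is that any $\lambda\in p_{00}(T)$ is a pole of $T$ of finite rank, hence $T-\lambda I$ is Browder and in particular upper semi-Weyl, so $\lambda\in\Delta_a(T)$; this gives $p_{00}(T)\subset\Delta_a(T)\subset\Delta_{uf}(T)=p_{00}(T)$, yielding property $(b)$. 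For the spectral identity, I only need $\sigma_{uw}(T)\subset\sigma_{uf}(T)$: if $\lambda\notin\sigma_{uf}(T)$ then either $\lambda\notin\sigma_a(T)$ (in which case $\lambda\notin\sigma_{uw}(T)$ trivially) or $\lambda\in\Delta_{uf}(T)=p_{00}(T)$ (in which case $T-\lambda I$ is Browder, so again $\lambda\notin\sigma_{uw}(T)$).

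Part (ii) follows the same pattern with B-Fredholm replacements. The $(\Longleftarrow)$ direction is the identical chain of equalities with $\Delta_{uf}^g$, $\Delta_a^g$, $p_0(T)$ in place of $\Delta_{uf}$, $\Delta_a$, $p_{00}(T)$. For $(\Longrightarrow)$, the only change is that for $\lambda\in p_0(T)$ I would use Drazin invertibility of $T-\lambda I$ to conclude $\lambda\notin\sigma_{ubw}(T)$, giving $p_0(T)\subset\Delta_a^g(T)$ and hence $(gb)$; for the spectral identity the case analysis is the same, now invoking that a pole is automatically upper semi-B-Weyl.

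No serious obstacle is anticipated: the entire argument rests on the inclusion $\sigma_{uf}\subset\sigma_{uw}$ (resp.\ $\sigma_{ubf}\subset\sigma_{ubw}$) and the fact that (finite-rank) poles are Browder (resp.\ Drazin invertible). The only mild care needed is to remember that $p_{00}(T)\subset\sigma_a(T)$ and $p_0(T)\subset\sigma_a(T)$ when writing the chain of containments, which is automatic because poles are eigenvalues.
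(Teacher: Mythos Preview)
Your proposal is correct and follows essentially the same route as the paper: both arguments use the inclusion $\Delta_a(T)\subset\Delta_{uf}(T)$ (resp.\ $\Delta_a^g(T)\subset\Delta_{uf}^g(T)$) together with the standard fact $p_{00}(T)\subset\Delta_a(T)$ (resp.\ $p_0(T)\subset\Delta_a^g(T)$) to get property $(b)$ (resp.\ $(gb)$), and then deduce the spectral equality. The only cosmetic difference is that the paper obtains $\sigma_{uf}(T)=\sigma_{uw}(T)$ in one line via the complement identity $\sigma_{uf}(T)=\sigma_a(T)\setminus p_{00}(T)=\sigma_{uw}(T)$, whereas you do an equivalent elementwise case analysis.
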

\begin{proof} (i) Suppose that satisfies property $(bz_{1}),$ that's $\Delta_{uf}(T)=p_{00}(T).$ As $\Delta_{a}(T)\subset\Delta_{uf}(T),$ then   $\Delta_{a}(T)\subset p_{00}(T)$ and since the inclusion $\Delta_{a}(T)\supset p_{00}(T)$ is always  true, it follows that $\Delta_{a}(T)= p_{00}(T).$ Moreover, we have $\sigma_{uf}(T)=\sigma_a(T)\setminus p_{00}(T)=\sigma_{uw}(T).$ The converse is obvious.\\
(ii) Assume that satisfies property $(gbz_{1}),$ that's $\Delta_{uf}^g(T)=p_{0}(T).$ As $\Delta_{a}^g(T)\subset\Delta_{uf}^g(T),$ then   $\Delta_{a}^g(T)\subset p_{0}(T)$ and since the inclusion $\Delta_{a}^g(T)\supset p_{0}(T)$ is always  true, it follows that $\Delta_{a}^g(T)= p_{0}(T).$  Furthermore, we have $\sigma_{ubf}(T)=\sigma_a(T)\setminus p_{0}(T)=\sigma_{ubw}(T).$ The converse is also clear.
\end{proof}

\begin{rema} Generally, property $(b)$  and property $(gb)$  do not imply property $(bz_{1})$ and property  $(gbz_{1}),$  respectively. For this,  we consider the left shift operator $L$ defined on $l^2,$ we have $\sigma_a(L)=\sigma_{ubw}(L)=\sigma_{uw}(L)=D(0,1)$ and $p_{0}(L)=\emptyset.$ So $L$ satisfies property $(gb)$ and then property $(b).$ It is already showed   in \cite[Example 2.3]{kaoutar-zariouh} that   $\sigma_{uf}(L)=C(0,1).$     Thus   $\Delta_{uf}(L)\not=\emptyset=p_{00}(L)$ and  $L$ does not satisfy  property $(bz_{1}).$ Hence it does not satisfy  property $(gbz_{1}).$ Note also that from \cite[Remark 6]{kaoutar-zariouh}, we have  $\sigma_{ubf}(L)=C(0,1).$
\end{rema}

 In the next corollary, we show that an operator satisfying property $(gbz_{1}),$ satisfies property $(bz_{1}).$ The operator $T=R\oplus 0$ given in    Example \ref{ex1}, proves that the converse is not generally  true. Furthermore, we give a condition of equivalence between them.

\begin{cor}\label{cor1} Let $T\in L(X).$ The following statements are equivalent.\\
(i)  $T$ satisfies property $(gbz_{1});$ \\
(ii)  $T$ satisfies property $(bz_{1})$ and $p_{0}(T)=p_{0}^a(T).$
\end{cor}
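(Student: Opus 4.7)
The plan is to derive this equivalence by chaining together the characterizations established in Theorem~\ref{thm1} with the classical results recalled in Theorem~\ref{thm0}, rather than attempting a direct spectral argument from the definitions. Concretely, the identity $\sigma_a(T)\setminus\sigma_{ubf}(T)=p_0(T)$ defining $(gbz_1)$ will be decomposed into a ``Fredholm--Weyl'' part and a ``pole'' part, each of which is already understood.

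First I would invoke Theorem~\ref{thm1}(ii) to replace $(gbz_1)$ by the conjunction of property $(gb)$ and the equality $\sigma_{ubf}(T)=\sigma_{ubw}(T)$. Next, by Theorem~\ref{thm0}(i), property $(gb)$ itself breaks up as property $(b)$ together with $p_0(T)=p_0^a(T)$; and by Theorem~\ref{thm0}(iii), the B-version $\sigma_{ubf}(T)=\sigma_{ubw}(T)$ is equivalent to the non-B version $\sigma_{uf}(T)=\sigma_{uw}(T)$. Assembling these, $(gbz_1)$ becomes
\[
(b)\ \textrm{holds},\quad p_0(T)=p_0^a(T),\quad \sigma_{uf}(T)=\sigma_{uw}(T).
\]
Finally, applying Theorem~\ref{thm1}(i) in the reverse direction, the first and third conditions collapse to property $(bz_1)$, leaving exactly property $(bz_1)$ together with $p_0(T)=p_0^a(T)$, which is (ii).

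For the converse direction, I would simply run the same chain backwards: starting from $(bz_1)$ and $p_0(T)=p_0^a(T)$, Theorem~\ref{thm1}(i) yields property $(b)$ and $\sigma_{uf}(T)=\sigma_{uw}(T)$; then Theorem~\ref{thm0}(i) combined with the hypothesis $p_0(T)=p_0^a(T)$ upgrades $(b)$ to $(gb)$, while Theorem~\ref{thm0}(iii) upgrades the scalar Weyl equality to $\sigma_{ubf}(T)=\sigma_{ubw}(T)$; a second application of Theorem~\ref{thm1}(ii) then delivers $(gbz_1)$.

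There is no real obstacle here: the content of the statement is essentially that the passage from $(bz_1)$ to $(gbz_1)$ is governed by the same additional equality $p_0(T)=p_0^a(T)$ that separates $(b)$ from $(gb)$, since the B-Fredholm/Fredholm spectral equality on one side and the B-Weyl/Weyl equality on the other are already known to be equivalent by Theorem~\ref{thm0}(iii). The only place one must be slightly careful is to make sure the two equalities of spectra are used in the correct direction at each step, so that the two conjunctions produced by Theorem~\ref{thm1}(i) and Theorem~\ref{thm1}(ii) line up after eliminating the auxiliary conditions $\sigma_{uf}(T)=\sigma_{uw}(T)$ and $\sigma_{ubf}(T)=\sigma_{ubw}(T)$ via Theorem~\ref{thm0}(iii).
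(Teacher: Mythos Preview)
Your argument is correct and, for the implication (ii)~$\Rightarrow$~(i), coincides exactly with the paper's proof, which simply says this ``is an immediate consequence of Theorems~\ref{thm1} and~\ref{thm0}.'' For (i)~$\Rightarrow$~(ii), however, the paper does not run the equivalence chain backwards as you do; instead it argues directly from the definitions: from $\Delta_{uf}(T)\subset\Delta_{uf}^g(T)=p_0(T)$ it extracts $\Delta_{uf}(T)=p_{00}(T)$ (so $(bz_1)$ holds), and then, using that $(gbz_1)$ forces $\sigma_{ubf}(T)=\sigma_{ubw}(T)$, it computes $p_0(T)=\mathrm{iso}\,\sigma_a(T)\cap(\sigma_{ubf}(T))^C=\mathrm{iso}\,\sigma_a(T)\cap(\sigma_{ubw}(T))^C=p_0^a(T)$. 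Your approach has the virtue of being completely symmetric and of making transparent that the corollary is a formal consequence of Theorems~\ref{thm0} and~\ref{thm1}; the paper's forward direction is a bit more hands-on and avoids invoking the full strength of Theorem~\ref{thm0}(i) in that direction.
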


\begin{proof} (i) $\Longrightarrow$ (ii) The inclusion  $\Delta_{uf}(T)\subset\Delta_{uf}^g(T)$ is always true. As $T$ satisfies property $(gbz_{1})$ then $\Delta_{uf}(T)\subset p_{0}(T).$ Hence $\Delta_{uf}(T)= p_{00}(T).$ Moreover,

 $  p_{0}(T)=\mbox{iso}\,\sigma_a(T)\cap(\sigma_{ubf}(T))^C=\mbox{iso}\,\sigma_a(T)\cap(\sigma_{ubw}(T))^C = p_{0}^a(T).$\\
(i) $\Longleftarrow$ (ii) Is an immediate consequence of  Theorems \ref{thm1} and \ref{thm0}.
\end{proof}

\begin{cor}\label{cor2} For every $T\in L(X),$ we have the following statements.\\
(i) $T$ satisfies property $(bz_{1})$ if and only if $T$ satisfies $(bz)$ and $p_{00}(T)=p_{00}^a(T).$\\
(ii) $T$ satisfies property $(gbz_{1})$ if and only if $T$ satisfies $(gbz)$ and $p_{0}(T)=p_{0}^a(T).$
\end{cor}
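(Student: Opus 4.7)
The plan is to exploit the chain of always-true inclusions
\[ p_{00}(T)\subseteq p_{00}^a(T)\subseteq \Delta_{uf}(T)\qquad\text{and}\qquad p_{0}(T)\subseteq p_{0}^a(T)\subseteq \Delta_{uf}^g(T),\]
and to observe that each assertion amounts to the outer ends of one chain coinciding, which forces the middle term to be squeezed with them.

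For part (i), I would first justify the two inclusions in the first chain. The leftmost is immediate from the definitions, since a pole of finite rank is in particular a left pole of finite rank. For the second, a left pole of finite rank $\lambda$ provides $\lambda\in\sigma_a(T)$ together with $T-\lambda I$ of finite ascent, finite nullity and closed iterated range; this makes $T-\lambda I$ upper semi-Fredholm and therefore places $\lambda$ in $\sigma_a(T)\setminus\sigma_{uf}(T)=\Delta_{uf}(T)$. Once the chain is set up, assuming $(bz_1)$ (i.e.\ $\Delta_{uf}(T)=p_{00}(T)$) collapses the whole chain, yielding simultaneously property $(bz)$ and the equality $p_{00}(T)=p_{00}^a(T)$. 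The converse is a direct substitution of $p_{00}^a(T)=p_{00}(T)$ into the defining equality of $(bz)$.

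Part (ii) follows the same pattern in the B-Fredholm setting. The inclusion $p_{0}^a(T)\subseteq\Delta_{uf}^g(T)$ is the only point needing a brief argument: a left pole is left Drazin invertible, hence upper semi-B-Fredholm, placing $\lambda$ in $\sigma_a(T)\setminus\sigma_{ubf}(T)$. The same squeeze then shows $(gbz_1)$ is equivalent to $(gbz)$ holding together with $p_{0}(T)=p_{0}^a(T)$. As an alternative route, I could combine Corollary~\ref{cor1} with Theorem~\ref{thm0}(iv) and the elementary remark that $p_{0}(T)=p_{0}^a(T)$ automatically forces $p_{00}(T)=p_{00}^a(T)$, thereby deducing (ii) from (i); but the direct chain argument is cleaner and parallels (i) perfectly.

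The only non-formal step in the whole argument is the inclusion $p_{00}^a(T)\subseteq\Delta_{uf}(T)$ and its B-Fredholm analogue, and this is the step where I would be most careful; however, it is standard bookkeeping from the characterization of (left) poles via (left) Drazin invertibility, so it is a minor rather than a genuine obstacle.
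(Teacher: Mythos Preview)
Your argument is correct. For part (i) your squeeze via the chain $p_{00}(T)\subseteq p_{00}^a(T)\subseteq \Delta_{uf}(T)$ is slightly more self-contained than the paper's route: the paper first invokes Theorem~\ref{thm1} to obtain $\sigma_{uf}(T)=\sigma_{uw}(T)$ and then uses the identification $p_{00}^a(T)=\mbox{iso}\,\sigma_a(T)\cap(\sigma_{uw}(T))^C$ to conclude $p_{00}(T)=p_{00}^a(T)$. Both arguments are short, but yours avoids the detour through the Weyl spectrum. For part (ii), the paper takes exactly what you call your ``alternative route'': it combines Corollary~\ref{cor1}, part (i), and Theorem~\ref{thm0}(iv), rather than repeating the squeeze in the B-Fredholm setting. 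So your primary argument for (ii) is a mild variant, while your secondary one coincides with the paper's.
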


\begin{proof} (i) Since $T$ satisfies property $(bz_{1}),$ then

  $ p_{00}(T)=\mbox{iso}\,\sigma_a(T)\cap(\sigma_{uf}(T))^C=\mbox{iso}\,\sigma_a(T)\cap(\sigma_{uw}(T))^C = p_{00}^a(T).$ So  $T$ satisfies property $(bz).$ The converse is clear.\\
  (ii) Follows directly from  Corollary \ref{cor1},  the first assertion (i) and Theorem \ref{thm0}.
\end{proof}

\begin{rema} From Corollary \ref{cor2}, if $T\in L(X)$ satisfies property  $(bz_{1})$ or property  $(gbz_{1}),$ then it satisfies property $(bz).$ But the converse is not true, as  the  following example shows.  We consider the operator $T$ defined on the Banach space  $l^2\oplus l^2$ by $T=R\oplus P,$ where  $P$ is the operator defined on $l^2$ by $ P(x_1,x_2,x_3,\ldots)=(0,x_2,x_3,x_4,\ldots).$ Then  $\sigma_a(T)=C(0, 1)\cup\{0\},$ $\sigma_{uf}(T)=\sigma_{uw}(T)=C(0, 1),$ $p_{0}^a(T)=p_{00}^a(T)=\{0\}.$
So $\Delta_{uf}(T)=p_{00}^a(T).$ This means that $T$ satisfies property  $(bz)$ [or equivalently $(gbz)$]. But $T$ does not satisfy neither property $(bz_{1})$ nor  property $(gbz_{1}),$ since $\sigma_{ubf}(T)=\sigma_{ubw}(T)=C(0, 1)$ and $p_{0}(T)=p_{00}(T)=\emptyset.$
\end{rema}

\begin{cor}\label{cor3} Let   $T\in L(X).$ If $T$ and $T^*$ have the  SVEP on $(\sigma_{uf}(T))^C,$ then $T$ satisfies properties $(gbz_{1})$ and  $(bz_{1}).$
\end{cor}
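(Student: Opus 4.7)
The plan is to combine the SVEP hypotheses on $T$ and $T^*$ to produce the two spectral identifications needed to invoke Theorem \ref{thm1}(ii) and Corollary \ref{cor1}.

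First I would observe that $\sigma_{uf}(T)\supseteq\sigma_{uw}(T)$, so $(\sigma_{uw}(T))^C\subseteq(\sigma_{uf}(T))^C$, and hence the hypothesis on $T^*$ implies $T^*$ has SVEP on $(\sigma_{uw}(T))^C$. By the equivalence (i)$\Longleftrightarrow$(iv) of the preceding corollary (which rests on Theorem I, Theorem II and Lemma \ref{lem2}), this gives that $T$ satisfies property $(gb)$ together with $\sigma_{ubw}(T)=\sigma_{bw}(T)$. In particular, from Lemma \ref{lem2} we will also get $\sigma_{uw}(T)=\sigma_{w}(T)$ and $p_{0}(T)=p_{0}^a(T)$, the last of which is one of the ingredients needed to pass from $(bz_{1})$ to $(gbz_{1})$ via Corollary \ref{cor1}.

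Next I would use the SVEP hypothesis on $T$ itself to force $\sigma_{uf}(T)=\sigma_{uw}(T)$. The inclusion $\sigma_{uw}(T)\subseteq\sigma_{uf}(T)$ being trivial, it suffices to take $\lambda\notin\sigma_{uf}(T)$; then $T-\lambda I$ is upper semi-Fredholm and, since $T$ has SVEP at $\lambda$, the standard localized-SVEP result yields $\mathrm{asc}(T-\lambda I)<\infty$, whence $\mathrm{ind}(T-\lambda I)\leq 0$ and so $\lambda\notin\sigma_{uw}(T)$. Applying Theorem \ref{thm0}(iii), the equality upgrades to $\sigma_{ubf}(T)=\sigma_{ubw}(T)$.

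At this point, combining property $(gb)$ for $T$ with $\sigma_{ubf}(T)=\sigma_{ubw}(T)$ and invoking Theorem \ref{thm1}(ii) yields property $(gbz_{1})$ for $T$. Property $(bz_{1})$ then follows immediately from Corollary \ref{cor1}, since $(gbz_{1})$ trivially entails $(bz_{1})$. The main obstacle, to the extent there is one, is the passage from the SVEP of $T$ at a point where $T-\lambda I$ is upper semi-Fredholm to $\mathrm{ind}(T-\lambda I)\leq 0$; I would simply cite the standard localized-SVEP result (as in Aiena's monograph) rather than reproduce it. Everything else amounts to assembling the lemmas, the corollary of Section~2, and Theorems \ref{thm0} and \ref{thm1} in the correct order.
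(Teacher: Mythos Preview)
Your argument is correct in substance, but note a slip that appears twice: the always-true inclusion is $\sigma_{uf}(T)\subseteq\sigma_{uw}(T)$ (upper semi-Weyl is a subclass of upper semi-Fredholm), not the reverse. Fortunately your conclusions from it are the right ones: $(\sigma_{uw}(T))^{C}\subseteq(\sigma_{uf}(T))^{C}$, and in the second paragraph the non-trivial inclusion to verify is $\sigma_{uw}(T)\subseteq\sigma_{uf}(T)$, which is exactly what you prove from the SVEP of $T$. So the logic stands once the stated inclusions are corrected.

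Your route differs from the paper's. The paper uses the SVEP of $T$ on $(\sigma_{uf}(T))^{C}$ to invoke \cite[Theorem~2.7]{kaoutar-zariouh} and obtain property $(gbz)$ directly, then uses the SVEP of $T^{*}$ (via Lemma~\ref{lem2}) to get $p_{0}(T)=p_{0}^{a}(T)$, and concludes $(gbz_{1})$ from Corollary~\ref{cor2}(ii). You instead extract property $(gb)$ from the SVEP of $T^{*}$ via the Section~2 corollary, use the SVEP of $T$ only to force $\sigma_{uf}(T)=\sigma_{uw}(T)$ (hence $\sigma_{ubf}(T)=\sigma_{ubw}(T)$ by Theorem~\ref{thm0}(iii)), and then apply Theorem~\ref{thm1}(ii). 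Both are short and valid; your version has the advantage of being self-contained (it does not appeal to the external characterization of $(gbz)$), while the paper's version is slightly more streamlined since it avoids reproving $\sigma_{uf}(T)=\sigma_{uw}(T)$ from scratch.
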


\begin{proof} As  $T$ have the  SVEP on $(\sigma_{uf}(T))^C$ then from  \cite[Theorem 2.7]{kaoutar-zariouh}, $T$ satisfies property $(gbz).$ By Lemma \ref{lem2}, the SVEP for $T^*$ on  $(\sigma_{uf}(T))^C$ implies  that $p_{0}(T)=p_{0}^a(T).$ Thus by Corollary \ref{cor2}, property $(gbz_{1})$ holds for $T$ and then property $(bz_{1})$ holds also for $T.$
\end{proof}

\begin{rema} We cannot guarantee the property $(gbz_{1}),$ for an operator $T,$  if we restrict  only to the SVEP of $T$  on $(\sigma_{uf}(T))^C$ or  to the SVEP of $T^*$ on $(\sigma_{uf}(T))^C.$ The operator $T=R\oplus P$ defined above has the SVEP, but it does not satisfy properties $(gbz_{1})$ and $(bz_{1}).$ Here  $T$ has the SVEP, but ${\mathcal S}(T^*)={\mathcal S}(L)=\{ \lambda\in\mathbb{C}: 0\leq |\lambda|< 1 \}$  and $\sigma_{uf}(T)=C(0, 1).$ On the other hand, It already mentioned that the left shift operator $L$ does  not satisfy the properties $(bz_{1})$ and $(gbz_{1}),$ even if $L^*=R$ has the SVEP.
\end{rema}

\section{On the extended  Rako\v{c}evi\'{c}'s property}

Similarly to the Definition \ref{defn1}, we introduce in the next definition a new extension of Rako\v{c}evi\'{c}'s property $(w),$  and a new extension of property $(gw).$

\begin{dfn}\label{defn2}
 A bounded linear operator  $T\in L(X)$ is said to satisfy:\\
  i) Property $(w_{\pi_{00}})$ if $\Delta_{uf}(T)= \pi_{00}(T),$ or equivalently
$\sigma_{a}(T)= \sigma_{uf}(T)\bigsqcup \pi_{00}(T).$ \\
  ii)  Property $(gw_{\pi_{00}})$ if $\Delta_{uf}^g(T)= \pi_{0}(T),$ or equivalently
$\sigma_{a}(T)= \sigma_{ubf}(T)\bigsqcup \pi_{0}(T).$
\end{dfn}

\begin{ex} \label{ex2} ~~

 \noindent1. The operator $T=R\oplus 0$ given in    Example \ref{ex1}, satisfies property $(w_{\pi_{00}}),$ since $\Delta_{uf}(T)=\pi_{00}(T)=\emptyset.$ But it does not satisfy property  $(gw_{\pi_{00}}),$ since $\Delta_{uf}^g(T)=\{0\}\neq \pi_{0}(T)=\emptyset.$ Observe that $\sigma(T)=D(0, 1).$  \\
2. Let $I$ the identity operator on an infinite dimensional Banach space $X.$ It is clear that $\sigma_{a}(I)=\sigma_{uf}(I)=\pi_{0}(I)=\{1\}$ and  $\sigma_{ubf}(I)=\pi_{00}(I)=\emptyset.$ So $\Delta_{uf}(I)=\emptyset=\pi_{00}(I)$ and $\Delta_{uf}^{g}(I)=\{1\}=\pi_{0}(I).$ So the identity operator satisfies the  properties $(w_{\pi_{00}})$ and $(gw_{\pi_{00}}).$\\
3. Let $L$ be the left shift operator defined on $l^2.$ It is already mentioned above  that   $\sigma_{uf}(L)=\sigma_{ubf}(L)=C(0,1).$ As $\sigma_a(L)=\sigma(L)=D(0, 1),$ then  $\pi_{0}(L)=\pi_{00}(L)=\emptyset.$ So the left shift operator  does not satisfy neither  property $(w_{\pi_{00}})$ nor  property $(gw_{\pi_{00}}).$
\end{ex}

We prove in the next theorem that properties    $(w_{\pi_{00}})$ and $(gw_{\pi_{00}})$ are respectively, extensions of properties  $(w)$ and   $(gw).$

\begin{thm}\label{thm2}  The following statements hold for every  $T\in L(X).$ \\
(i) $T$ satisfies property $(w_{\pi_{00}})$ if and only if $T$ satisfies property  $(w)$ and  $\sigma_{uf}(T)=\sigma_{uw}(T).$\\
(ii)  $T$ satisfies property $(gw_{\pi_{00}})$ if and only if $T$ satisfies property  $(gw)$ and $\sigma_{ubf}(T)=\sigma_{ubw}(T).$
\end{thm}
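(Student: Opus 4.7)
The plan is to mirror the structure of the proof of Theorem \ref{thm1}: each equivalence will be reduced to two inclusions, of which one comes free from the general chain $\sigma_{uf}(T)\subset\sigma_{uw}(T)$ (resp.\ $\sigma_{ubf}(T)\subset\sigma_{ubw}(T)$), while the other requires a short local-spectral-theory argument based on SVEP. Throughout, the trivial inclusions give $\Delta_{a}(T)\subset\Delta_{uf}(T)$ and $\Delta_{a}^{g}(T)\subset\Delta_{uf}^{g}(T)$, which take care of the cheap half of every set equality.

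For the forward direction of (i) I would assume $\Delta_{uf}(T)=\pi_{00}(T)$. The inclusion $\Delta_{a}(T)\subset\pi_{00}(T)$ is then immediate. For the reverse inclusion, if $\lambda\in\pi_{00}(T)=\Delta_{uf}(T)$, then $\lambda$ is isolated in $\sigma(T)$ (so $T$ has SVEP at $\lambda$) and $T-\lambda I$ is upper semi-Fredholm; the classical fact that SVEP at $\lambda$ together with upper semi-Fredholmness forces $\mathrm{ind}(T-\lambda I)\le 0$ (see \cite{aiena1}) yields $\lambda\notin\sigma_{uw}(T)$, i.e.\ $\lambda\in\Delta_{a}(T)$. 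This proves property $(w)$. To upgrade to $\sigma_{uf}(T)=\sigma_{uw}(T)$, I would pick $\lambda\notin\sigma_{uf}(T)$ and split cases: if $\lambda\notin\sigma_{a}(T)$, then $T-\lambda I$ is bounded below and upper semi-Fredholm, so $\mathrm{ind}(T-\lambda I)\le 0$; if $\lambda\in\sigma_{a}(T)$, then $\lambda\in\Delta_{uf}(T)=\pi_{00}(T)$ and the same SVEP argument applies. The converse implication is a one-line computation: property $(w)$ combined with $\sigma_{uf}(T)=\sigma_{uw}(T)$ gives $\Delta_{uf}(T)=\sigma_{a}(T)\setminus\sigma_{uf}(T)=\sigma_{a}(T)\setminus\sigma_{uw}(T)=\Delta_{a}(T)=\pi_{00}(T)$.

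Part (ii) follows the same template with the B-variants: upper semi-Fredholm is replaced by upper semi-B-Fredholm, $\sigma_{uw}$ by $\sigma_{ubw}$, and $\pi_{00}$ by $\pi_{0}$. The SVEP step still applies because the corresponding local-spectral-theory result holds in the B-Fredholm setting, and the punctured-neighbourhood theorem \cite[Corollary 3.2]{berkani-sarih}, already invoked in Lemma \ref{lem1}, keeps the index calculation running; the remaining set-theoretic manipulations are identical.

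The main obstacle is the inclusion $\pi_{00}(T)\subset\Delta_{a}(T)$ (and the analogous $\pi_{0}(T)\subset\Delta_{a}^{g}(T)$): unlike the inclusion $p_{00}(T)\subset\Delta_{a}(T)$ used freely in the proof of Theorem \ref{thm1}, these are \emph{not} unconditional --- the weighted shift of Example \ref{ex1}.2 exhibits an isolated eigenvalue of finite multiplicity that lies in $\sigma_{uw}(T)$. One cannot appeal to generalities and must combine the upper semi-Fredholm information furnished by the hypothesis $\pi_{00}(T)=\Delta_{uf}(T)$ with the SVEP obtained from isolation in $\sigma(T)$; this is the only substantive piece of the proof.
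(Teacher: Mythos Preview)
Your proposal is correct and follows essentially the same route as the paper: the key step in both is that $\lambda\in\pi_{00}(T)$ is isolated in $\sigma(T)$, hence $T$ has SVEP at $\lambda$, and combined with upper semi-(B-)Fredholmness this forces $\lambda\notin\sigma_{uw}(T)$ (resp.\ $\sigma_{ubw}(T)$). The only cosmetic difference is that, once $\Delta_{uf}(T)=\pi_{00}(T)=\Delta_{a}(T)$ is established, the paper reads off $\sigma_{uf}(T)=\sigma_{a}(T)\setminus\pi_{00}(T)=\sigma_{uw}(T)$ by taking complements in $\sigma_{a}(T)$, which is a little quicker than your case-split.
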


\begin{proof}(i) Suppose that  $T$ satisfies property $(w_{\pi_{00}}),$ that's $\Delta_{uf}(T)= \pi_{00}(T).$   Then $\Delta_{a}(T)\subset\pi_{00}(T).$  If  $\lambda\in\pi_{00}(T),$ then $\lambda\in \mbox{iso}\,\sigma(T)$  and so   $T$ has the SVEP at $\lambda.$ Since $T-\lambda I$ is upper semi-Fredholm, then $\mbox{asc}(T-\lambda I)$ is finite and then   $\lambda\not\in\sigma_{uw}(T).$  Hence $\Delta_{a}(T)=\pi_{00}(T),$   and  consequently,  $\sigma_{uf}(T)=\sigma_a(T)\setminus\pi_{00}(T)=\sigma_{uw}(T).$   The converse is clear.\\
(ii)   $\Delta_{a}^g(T)\subset\pi_{0}(T),$ since $T$ satisfies property $(gw_{\pi_{00}}).$ Moreover,  $\pi_{0}(T)=\mbox{iso}\,\sigma_a(T)\cap(\sigma_{ubf}(T))^C.$  By the proof of
\cite[Theorem 2.8]{Berkani-koliha}, $\pi_{0}(T)=\mbox{iso}\,\sigma_a(T)\cap(\sigma_{ubw}(T))^C\subset\Delta_{a}^g(T).$  Hence $\Delta_{a}^g(T)=\pi_{0}(T),$ and this means that  $T$ satisfies property  $(gw)$. Moreover,  $\sigma_{ubf}(T)=\sigma_a(T)\setminus\pi_{0}(T)=\sigma_{ubw}(T).$
 The converse is also clear.
\end{proof}

\begin{rema} Note that the sufficient condition ``$\sigma_{uf}(T)=\sigma_{uw}(T)$" [which is equivalent to the condition ``$\sigma_{ubf}(T)=\sigma_{ubw}(T)$"] assumed in the two statements   of Theorem \ref{thm2} is crucial, as we can see in the following example: The left shift operator $L$ satisfies  properties   $(gw)$ and  $(w),$ since $\Delta_{a}^g(T)=\pi_{0}(L)=\emptyset$ and $\Delta_{a}(L)=\pi_{00}(L)=\emptyset.$ But it does not satisfy neither $(w_{\pi_{00}})$ nor $(gw_{\pi_{00}}),$ since
$\Delta_{uf}(L)\neq\emptyset=\pi_{00}(L)$ and $\Delta_{uf}^g(L)\neq\emptyset=\pi_{0}(L).$
\end{rema}

In the next corollary, we show that an operator satisfying property $(gw_{\pi_{00}}),$ satisfies property $(w_{\pi_{00}}).$
The operator $T=S\oplus 0$ defined on $ l^2\oplus l^2,$  where $S$ is defined on $l^2$ by $ S(x_{1},x_{2},...)=(0,\frac{x_{1}}{2},\frac{x_{2}}{3},\ldots)$ proves that the converse is not generally  true. Indeed,   we have $\sigma_{a}(T)=\sigma_{uf}(T)=\{0\}$ and $\pi_{00}(T)=\emptyset.$ So $T$ satisfies property  $(w_{\pi_{00}}).$ But it does not satisfy property $(gw_{\pi_{00}}),$ since  $\pi_{0}(T)=\{0\}.$

 Furthermore, we give a condition of equivalence between them.

\begin{cor}\label{cor3}Let $T\in L(X).$ The following statements are equivalent.\\
(i)  $T$ satisfies property $(gw_{\pi_{00}});$ \\
(ii)  $T$ satisfies property $(w_{\pi_{00}})$ and $\pi_{0}(T)=p_{0}^a(T).$
\end{cor}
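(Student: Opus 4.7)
The plan is to prove the equivalence by chaining together Theorem \ref{thm2} (which relates $(gw_{\pi_{00}})$ and $(w_{\pi_{00}})$ to their non-extended counterparts $(gw)$ and $(w)$) together with Theorem \ref{thm0} (which links $(gw)$ to $(w)$ via the coincidence $p_0^a(T)=\pi_0(T)$, and also gives the equivalence $\sigma_{uf}(T)=\sigma_{uw}(T)\Longleftrightarrow \sigma_{ubf}(T)=\sigma_{ubw}(T)$). No new spectral identities need to be proved; the argument reduces to a careful bookkeeping between these two theorems.

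For the forward direction, I would start from $(gw_{\pi_{00}})$ and apply Theorem \ref{thm2}(ii) to conclude that $T$ satisfies $(gw)$ and that $\sigma_{ubf}(T)=\sigma_{ubw}(T)$. Then Theorem \ref{thm0}(ii) splits $(gw)$ into the pair ``$(w)$ holds and $p_0^a(T)=\pi_0(T)$'', which delivers the required equality $\pi_0(T)=p_0^a(T)$. To finish, I would use Theorem \ref{thm0}(iii) to upgrade $\sigma_{ubf}(T)=\sigma_{ubw}(T)$ to $\sigma_{uf}(T)=\sigma_{uw}(T)$, and then invoke Theorem \ref{thm2}(i) with $(w)$ and this spectral equality to conclude $(w_{\pi_{00}})$.

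For the converse, I would begin with $(w_{\pi_{00}})$ and apply Theorem \ref{thm2}(i) to obtain $(w)$ and $\sigma_{uf}(T)=\sigma_{uw}(T)$. Combined with the hypothesis $p_0^a(T)=\pi_0(T)$, Theorem \ref{thm0}(ii) upgrades $(w)$ to $(gw)$. At the same time, Theorem \ref{thm0}(iii) upgrades $\sigma_{uf}(T)=\sigma_{uw}(T)$ to $\sigma_{ubf}(T)=\sigma_{ubw}(T)$. Applying Theorem \ref{thm2}(ii) to these two facts then yields $(gw_{\pi_{00}})$.

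There is no real obstacle here; the only thing one has to be slightly careful about is recognizing that the ``essential'' coincidence of spectra in Theorem \ref{thm2} is the same condition in both directions thanks to Theorem \ref{thm0}(iii), so that one does not need to re-derive $\sigma_{uf}(T)=\sigma_{uw}(T)$ or $\sigma_{ubf}(T)=\sigma_{ubw}(T)$ from scratch in either implication. The proof is therefore essentially a diagram chase between the two earlier results.
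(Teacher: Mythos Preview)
Your proof is correct. For the implication (ii) $\Rightarrow$ (i) you do exactly what the paper does: it simply cites Theorems \ref{thm2} and \ref{thm0} and calls the implication ``an immediate consequence'' of them, which is precisely your chain $(w_{\pi_{00}})\Rightarrow (w)\,\&\,\sigma_{uf}=\sigma_{uw}\Rightarrow (gw)\,\&\,\sigma_{ubf}=\sigma_{ubw}\Rightarrow (gw_{\pi_{00}})$.

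For (i) $\Rightarrow$ (ii) the paper takes a slightly different, more hands-on route. Instead of passing through $(gw)$ and $(w)$, it argues directly at the level of the sets: from $\Delta_{uf}(T)\subset\Delta_{uf}^g(T)=\pi_0(T)$ together with the finite-nullity condition coming from $\lambda\notin\sigma_{uf}(T)$ one gets $\Delta_{uf}(T)\subset\pi_{00}(T)$, and the reverse inclusion comes from $\pi_{00}(T)\subset\pi_0(T)=\Delta_{uf}^g(T)$ plus the fact that a point of $\pi_{00}(T)$ outside $\sigma_{ubf}(T)$ is automatically outside $\sigma_{uf}(T)$. The equality $\pi_0(T)=p_0^a(T)$ is then read off from $\pi_0(T)=\mbox{iso}\,\sigma_a(T)\cap(\sigma_{ubf}(T))^C=\mbox{iso}\,\sigma_a(T)\cap(\sigma_{ubw}(T))^C=p_0^a(T)$, using $\sigma_{ubf}(T)=\sigma_{ubw}(T)$ from Theorem \ref{thm2}(ii). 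Your approach trades this elementwise computation for a clean diagram chase through Theorems \ref{thm2} and \ref{thm0}; it is arguably more transparent and avoids having to justify the set identities, at the cost of invoking slightly more machinery. Both arguments are equally short and neither is preferable in any essential way.
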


\begin{proof}(i) $\Longrightarrow$ (ii)  Since   $\Delta_{uf}(T)\subset\Delta_{uf}^g(T)$ and  $T$ satisfies property $(gw_{\pi_{00}}),$ then $\Delta_{uf}(T)\subset \pi_{00}(T).$ On the other hand, $\pi_{00}(T)\subset\pi_{0}(T)=\Delta_{uf}^g(T).$ Hence $\Delta_{uf}(T)= \pi_{00}(T).$ Moreover,

 $ \pi_{0}(T)=\mbox{iso}\,\sigma_a(T)\cap(\sigma_{ubf}(T))^C=\mbox{iso}\,\sigma_a(T)\cap(\sigma_{ubw}(T))^C = p_{0}^a(T).$\\
 (i) $\Longleftarrow$ (ii) Is an immediate consequence of  Theorems \ref{thm2} and \ref{thm0}.
\end{proof}

\begin{rema} \label{rema} It is easy to get  that  if $T\in L(X)$ satisfies property $(w_{\pi_{00}}),$ then $$p_{00}(T)=\pi_{00}(T)=p_{00}^a(T).$$ In addition, if $T$ satisfies property $(gw_{\pi_{00}}),$ then $p_{0}(T)=\pi_{0}(T)=p_{0}^a(T),$ and this implies that   $p_{00}(T)=\pi_{00}(T)=p_{00}^a(T).$
\end{rema}

According to \cite{kaoutar-zariouh}, an operator $T\in L(X)$ is said to satisfy property $(w_{\pi_{00}^{a}})$ if $\Delta_{uf}(T)=\pi_{00}^{a}(T)$ and is said to satisfy property  $(gw_{\pi_{00}^{a}})$ if $\Delta_{uf}^g(T)=\pi_{0}^{a}(T).$

\begin{cor}\label{cor4} For every $T\in L(X),$ we have the following statements.\\
(i) $T$ satisfies property $(w_{\pi_{00}})$ if and only if $T$ satisfies $(bz_{1})$ and $\pi_{00}(T)=p_{00}(T).$\\
(ii)  $T$ satisfies property $(gw_{\pi_{00}})$ if and only if $T$ satisfies $(gbz_{1})$ and $\pi_{0}(T)=p_{0}(T).$
\end{cor}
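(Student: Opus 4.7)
The plan is to deduce both equivalences directly from the definitions, using Remark~\ref{rema} as the main tool for the forward directions. Recall that poles are always isolated points of the spectrum, so the inclusions $p_{00}(T)\subset\pi_{00}(T)$ and $p_{0}(T)\subset\pi_{0}(T)$ hold for every operator; thus in each statement only one inclusion of the second equality is nontrivial.

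For (i), I would argue as follows. Assume $T$ satisfies $(w_{\pi_{00}})$, i.e.\ $\Delta_{uf}(T)=\pi_{00}(T)$. By Remark~\ref{rema} this already yields $\pi_{00}(T)=p_{00}(T)$, and substituting gives $\Delta_{uf}(T)=p_{00}(T)$, which is precisely property $(bz_{1})$. Conversely, if $T$ satisfies $(bz_{1})$ and $\pi_{00}(T)=p_{00}(T)$, then
\[
\Delta_{uf}(T)=p_{00}(T)=\pi_{00}(T),
\]
so $T$ satisfies $(w_{\pi_{00}})$.

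For (ii) the argument is parallel. Assume $T$ satisfies $(gw_{\pi_{00}})$, i.e.\ $\Delta_{uf}^{g}(T)=\pi_{0}(T)$. By Remark~\ref{rema} we then have $\pi_{0}(T)=p_{0}(T)$, hence $\Delta_{uf}^{g}(T)=p_{0}(T)$, which is property $(gbz_{1})$. Conversely, if $T$ satisfies $(gbz_{1})$ together with $\pi_{0}(T)=p_{0}(T)$, then $\Delta_{uf}^{g}(T)=p_{0}(T)=\pi_{0}(T)$, i.e.\ property $(gw_{\pi_{00}})$ holds.

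There is no real obstacle here; the whole content is packaged inside Remark~\ref{rema}, which already observes that $(w_{\pi_{00}})$ forces $p_{00}(T)=\pi_{00}(T)$ and $(gw_{\pi_{00}})$ forces $p_{0}(T)=\pi_{0}(T)$. The only point that requires a small amount of care is that in the forward direction one must invoke Remark~\ref{rema} \emph{before} comparing $\Delta_{uf}(T)$ (respectively $\Delta_{uf}^{g}(T)$) with the pole set, since without it the identification of $\pi_{00}(T)$ with $p_{00}(T)$ (respectively $\pi_{0}(T)$ with $p_{0}(T)$) is not automatic.
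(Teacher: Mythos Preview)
Your proof is correct and mirrors exactly what the paper intends: the corollary is stated there without proof, precisely because it follows immediately from Remark~\ref{rema} together with the definitions of $(bz_{1})$, $(gbz_{1})$, $(w_{\pi_{00}})$, and $(gw_{\pi_{00}})$. Your write-up makes this implicit reasoning explicit and is entirely in line with the paper's approach.
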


\begin{rema}   There  exist operators satisfying property $(bz_{1})$ [resp., property $(gbz_{1})$]  which do not satisfy property $(w_{\pi_{00}})$  [resp., property $(gw_{\pi_{00}})$].

\noindent1.     Let $T \in L(l^2)$ be  defined by $T(x_{1}, x_{2},...)=( \frac{x_{2}}{2},\frac{x_{2}}{3},\ldots).$  Property $(bz_{1})$  holds for $T,$ since  $\sigma_{a}(T)=\sigma_{uf}(T)=\{0\}$ and $p_{00}(T)=\emptyset.$ While property $(w_{\pi_{00}})$ does not hold for $T,$ since  $\pi_{00}(T)=\{0\}.$ Note that here $p_{00}(T)\neq\pi_{00}(T).$

\noindent2. Let $Q$ be defined on $l^1(\mathbb{N})$ by $Q(x_1,x_2,...)=(0,\alpha_{1}x_1, \alpha_{2}x_2,\ldots, a_kx_k, \ldots),$ where $(\alpha_{i})$ is a sequence of complex numbers such that $0<|\alpha_{i}|\leq 1$ and $\sum_{i=1}^{\infty} |\alpha_{i}|<\infty. $\\
And we define the operator  $T$ on $l^1(\mathbb{N})\oplus l^1(\mathbb{N})$ by $T=Q\oplus 0.$ We have $\sigma(T)=\sigma_{a}(T)=\{0\}$ and $\pi_{0}(T)=\{0\}.$ It is easily seen that   $\R(T^n)$ is not closed for any $n\in\mathbb{N},$ so that   $\sigma_{ubf}(T)=\{0\}$ and $p_{0}(T)=\emptyset.$ Thus  $\Delta_{uf}^{g}(T)=p_{0}(T)$ and $\Delta_{uf}^{g}(T)\neq\pi_{0}(T),$ as desired. Note also that here $p_{0}(T)\neq \pi_{0}(T).$
\end{rema}

\begin{rema} ~~

\noindent1.  The properties  $(w_{\pi_{00}})$ and $(w_{\pi_{00}^{a}})$ are independent.\\
We consider   the operator $S=R\oplus 0$ defined  on the Banach space $l^2\oplus \mathbb{C}^n.$ We have  $\sigma_{a}(S)=C(0, 1)\cup\{0\},$   $\sigma_{uf}(S)=C(0, 1),$ $\pi_{00}^a(S)=\{0\}$ and  $\pi_{00}(S)=\emptyset.$ So $\Delta_{uf}(S)=\pi_{00}^a(S),$ but $\Delta_{uf}(S)\neq\pi_{00}(S).$ \\
Now,  we consider the operator $U=R\oplus T$ defined on the $l^2\oplus l^2,$ where $T$ is defined  $l^2$ by $T(x_{1}, x_{2},...)=( \frac{x_{2}}{2},\frac{x_{2}}{3},\ldots).$ Then  $\sigma_{a}(U)=C(0, 1)\cup\{0\},$   $\sigma_{uf}(U)=C(0, 1)\cup\{0\},$ $\pi_{0}^a(U)=\pi_{00}^a(U)=\{0\}$ and  $\pi_{0}(U)=\pi_{00}(U)=\emptyset.$ So $\Delta_{uf}(U)=\pi_{00}(U),$ but $\Delta_{uf}(U)\neq\pi_{00}^a(U).$

\noindent2.  The properties  $(gw_{\pi_{00}})$ and $(gw_{\pi_{00}^{a}})$ are independent.\\
   We take the operator $V=R\oplus A$ defined $l^2\oplus l^2,$ where $A$ is defined by $A(x_{1},x_{2},\ldots)=(0,\frac{-x_{1}}{2},0,\ldots).$ $V$ satisfies property  $(gw_{\pi_{00}^{a}}),$ since $\sigma_{a}(V)=C(0, 1)\cup\{0\},$ $\sigma_{ubf}(V)=C(0, 1)$ and  $\pi_{0}^{a}(V)=\{0\}.$ But, it does not satisfy property $(gw_{\pi_{00}}),$ since $\pi_{0}(V)=\emptyset.$ On the other hand, the operator $U$ considered in the first point, does not satisfy property $(w_{\pi_{00}^{a}}),$ and this implies  that it does not  satisfy also  property $(gw_{\pi_{00}^{a}}).$ But, it satisfies property $(gw_{\pi_{00}}),$ since $\sigma_{ubf}(U)=C(0, 1)\cup\{0\}.$

\end{rema}

\noindent {\bf \underline{Conclusion}:}

As  conclusion, we  give a summary of the results obtained in
this paper. In the following diagram, which extends the one that has been   presented  in \cite{kaoutar-zariouh},   arrows signify implications
between the properties introduced in this paper and those that have been introduced in \cite{kaoutar-zariouh},  and other Weyl type
theorems (generalized or not). The numbers near the arrows are references to the results
in the present paper (numbers without brackets) or to the
bibliography therein (numbers in square brackets).

\begin{center}
 \vspace{5pt} \small{
\vbox{
\[
\begin{CD}
gaW@<\mbox{{\scriptsize\cite{kaoutar-zariouh}}}<<({gw}_{\pi_{00}^{a}})
@>\mbox{{\scriptsize\ref{thm2}}}>>(w_{\pi_{00}^{a}})@>\mbox{{\scriptsize\cite{kaoutar-zariouh}}}>>aW@<\mbox{{\scriptsize\cite{Berkani-koliha}}}<<gaW\\
@VV\mbox{{\scriptsize\cite{Berkani-koliha}}}V@VV\mbox{{\scriptsize\cite{kaoutar-zariouh}}}V@VV\mbox{{\scriptsize\cite{kaoutar-zariouh}}}V
@VV\mbox{{\scriptsize\cite{rakocevic1}}}V
@VV\mbox{{\scriptsize\cite{Berkani-koliha}}}V\\
gaB@<<\mbox{{\scriptsize\cite{kaoutar-zariouh}}}<(gbz)&  \,\,\,\Longleftrightarrow_{\mbox{{\scriptsize\cite{kaoutar-zariouh}}}}&(bz)@>>\mbox{{\scriptsize\cite{kaoutar-zariouh}}}>aB
&  \,\,\,\Longleftrightarrow_{\mbox{{\scriptsize\cite{amuch-zguitti}}}} &gaB\\
@.@AA\mbox{{\scriptsize\ref{cor2}}}A@AA\mbox{{\scriptsize\ref{cor2}}}A\\
(gw_{\pi_{00}})@>>\mbox{{\scriptsize\ref{cor4}}}>(gbz_{1})@>>\mbox{{\scriptsize\ref{cor1}}}>(bz_{1})
@<<\mbox{{\scriptsize\ref{cor4}}}<(w_{\pi_{00}})\\
@VV\mbox{{\scriptsize\ref{thm2}}}V@VV\mbox{{\scriptsize\ref{thm1}}}V@VV\mbox{{\scriptsize\ref{thm1}}}V@VV\mbox{{\scriptsize\ref{thm2}}}V\\
(gw)@>>\mbox{{\scriptsize\cite{berkani-zariouh1}}}>(gb)@>>\mbox{{\scriptsize\cite{berkani-zariouh1}}}>(b) @<<\mbox{{\scriptsize\cite{berkani-zariouh1}}}<(w)
\end{CD}
\]}}
\end{center}

Moreover, counterexamples were given to show that the reverse of each implication (presented with number without bracket) in
the diagram is not true. Nonetheless, it was proved that under some extra assumptions, these
implications are equivalences.\\

\noindent {\bf \underline{Remark }:}  This paper will be followed by a second one (in preparation), in which we will consider a ``Weyl-type''  version of the
results obtained, by introducing and studying the following properties.

\noindent {\bf \underline{Definition}:}[article in reparation] An operator $T\in L(X)$ is said to satisfy:\\
property  $(bz_{2})$ if $\Delta_{uf}(T)=p_{0}(T).$ \\
property  $(bz_{3})$ if $\Delta_{uf}(T)=p_{0}^{a}(T).$\\
property  $(w_{\pi_{0}})$ if $\Delta_{uf}(T)=\pi_{0}(T).$\\
property  $(w_{\pi_{0}^{a}})$ if $\Delta_{uf}(T)=\pi_{0}^{a}(T).$\\

\goodbreak
 {\small \noindent Kaoutar Ben Ouidren,\newline Laboratory (L.A.N.O), Department of
Mathematics,\newline Faculty of Science, Mohammed I University,\\
 \noindent Oujda 60000 Morocco.\\
 \noindent benouidrenkaoutar@gmail.com\\

  \noindent Hassan  Zariouh,\newline Department of
Mathematics (CRMEFO),\newline
 \noindent and laboratory (L.A.N.O), Faculty of Science,\newline
  Mohammed I University, Oujda 60000 Morocco.\\
 \noindent h.zariouh@yahoo.fr

\end{document}